\documentclass[11pt,letterpaper]{article}
\usepackage[utf8]{inputenc}
\usepackage{amsmath}
\usepackage{amsfonts}
\usepackage{amssymb}

\usepackage{amsthm}

\usepackage{graphicx}     
\usepackage{hyperref} 
\usepackage{url}
\usepackage{amsfonts} 

\usepackage{multicol}

\newtheorem{theorem}{Theorem}

\allowdisplaybreaks

\makeatletter
\@addtoreset{footnote}{page}
\makeatother

\begin{document}

\title{Unusual elementary axiomatizations \\ for abelian groups}

\author{Haydee Jiménez Tafur, Carlos Luque Arias, Yeison Sánchez Rubio\\               
\scriptsize Grupo de Álgebra (Universidad Pedagógica Nacional)\\    
\scriptsize Universidad Nacional de Colombia\\                
\scriptsize jimenezhaydee@gmail.com, luque.ca@gmail.com, yei506@gmail.com}                      

\maketitle

\abstract{ One of the most studied algebraic structures with one operation is the Abelian group, which is defined as a structure whose operation satisfies the associative and commutative properties, has identical element and every element has an inverse element. In this article, we characterize the Abelian groups with other properties and we even reduce it to two the list of properties to be fulfilled by the operation. For this, we make use of properties that, in general, are hardly ever mencioned.}

\section*{Introduction}
\noindent  The axiomatic presentation of mathematical theories allows the selection of different sets of axioms for its development. This choice depends on criteria of economy, elegance, simplicity or pedagogy.

The definition of an abelian group was initially formulated for finite groups (with the axioms of closure, associativity, commutativity and existence of inverses) by Kronecker in 1870 and by Weber for infinite groups in 1893 \cite{waerden}. In 1878 Cayley introduces the notion of abstract group and in 1882 Dick Van presents the first explicit definition of this notion \cite{wussing}. In 1938 Tarski  \cite{tarski} defines an abelian group $(G, +)$ as an associative and commutative quasigroup and characterizes it in terms of subtraction using only two axioms, one that indicates that the subtraction is an operation in $G$ and the other which is a property that includes three variables.  In 1952 Higman and Neumann \cite{higman} give an axiomatization for groups with one axiom in terms of division, using three variables.

In 1981 Neumann \cite{neumann81} proposes another single law in terms of multiplication and inversion, in equational form with four variables.  In 1993 McCune \cite{mccune} presents cha\-rac\-terizations of abelian groups with one axiom that has three or five variables, using computational tools, but in terms of operations such as \{addition and inverse\}, \{double subtraction\}, \{double subtraction, identity\}, \{subtraction, identity\}, \{subtraction, inverse\}, \{double subtraction, inverse\}. 

In all cases in which the groups or abelian groups are characterized in equational form with one axiom, it has an extensive expression and the proofs are intricate. How\-ever, in 1996 McCune and Sands \cite{mcsands} proposed a single law but in implicative form, which is simpler than the equational form, not only in appearance but in the proofs too.  

In the present work we give some characterizations of abelian groups with two elementary axioms whose expressions display an elegant simplicity. The same applies to the proofs which can help to understand this basic algebraic structure.  
 
Algebraic structures can be classified, giving them special names, according to the operations they involve.  We have limited ourselves to consider algebraic structures with one operation  $(G,+)$ with $G$ a nonempty set, called \textit{magma} or \textit{groupoid}. The best known are \textit{Semigroup}, a groupoid with  one associative operation (A); \textit{Monoid},  a semigroup that has neutral element (NE); \textit{Group},  a monoid such that all elements have inverse elements (IN); and \textit{Abelian group}, a commutative, (C), group.  

However, there are other properties  (see \cite{ilse}) such as : for all  $a$,  $b$,  $c$,  $d \in G$ 
\begin{itemize}
\item CAI. \textit{Cyclic associativity I}:  $a + (b + c) = c + (a + b)$.
\item CAII. \textit{Cyclic associativity II}:  $a + (b + c) = (c + a) + b$.
\item AGI. \textit{Abel-Grassmann I}:  $a + (b + c) = c + (b + a)$.
\item AGII. \textit{Abel-Grassmann II}:  $a + (b + c) = (b + a) + c$.
\item R. \textit{Reduced product property}: $(a + b) + c =  a + (c + b)$.
\item H. \textit{Hilbert property}\footnote{This property was presented as part of an axiomatization for real numbers in  \cite[p. 51-52]{hilbert}.}: the equations $x + a = b$  and  $a + y = b$  have a unique solution.
\end{itemize}

Algebraic structures whose operations satisfy some of these properties have also received special names such as \textit{Quasigroup}\footnote{This concept was introduced by B. A. Hausmann and O. Ore in 1937 \cite[p. 22]{ilse}. An equivalent definition appears in \cite[p. 50]{warner}.},  a groupoid that satisfy H and \textit{Loop}, a quasigroup having a neutral element. 

\begin{theorem}\label{teor1}
If  $(G, +)$  is a commutative semigroup  then it satisfies the properties CAI, CAII, AGI, AGII and R. 
\end{theorem}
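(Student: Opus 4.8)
The plan is to derive each of the five identities directly from the two defining properties of a commutative semigroup, namely associativity (A), $x+(y+z)=(x+y)+z$, and commutativity (C), $x+y=y+x$. In each case the strategy is the same: start from the left-hand side, reassociate so that the parentheses are positioned where the right-hand side needs them, and then swap the appropriate pair of summands using commutativity. No further machinery is needed, so I would simply fix arbitrary $a,b,c\in G$ and verify the five equations one at a time.

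Concretely, for R one writes $(a+b)+c \stackrel{\text{A}}{=} a+(b+c) \stackrel{\text{C}}{=} a+(c+b)$. For AGII one writes $a+(b+c) \stackrel{\text{A}}{=} (a+b)+c \stackrel{\text{C}}{=} (b+a)+c$. For CAI one writes $a+(b+c) \stackrel{\text{A}}{=} (a+b)+c \stackrel{\text{C}}{=} c+(a+b)$. The two remaining identities, AGI and CAII, require commuting a pair twice rather than once: for AGI, $a+(b+c) \stackrel{\text{A}}{=} (a+b)+c \stackrel{\text{C}}{=} c+(a+b) \stackrel{\text{C}}{=} c+(b+a)$; for CAII, $a+(b+c) \stackrel{\text{C}}{=} a+(c+b) \stackrel{\text{A}}{=} (a+c)+b \stackrel{\text{C}}{=} (c+a)+b$.

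I expect no real obstacle here; the only point requiring a moment of care is choosing, for AGI and CAII, the right order in which to apply A and C so that both the position of the parentheses and the order of the summands come out correctly, since a careless sequence of rewrites can leave one swap still to be done on a subterm that is no longer conveniently grouped. Beyond that bookkeeping, the proof is a routine chain of substitutions, and I would present it as five short displayed computations, each annotated with the property used at every step.
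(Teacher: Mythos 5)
Your proof is correct: each of the five identities is verified by a valid chain of applications of associativity and commutativity, and the paper itself omits the proof of this theorem as routine, so your computation is exactly the verification left to the reader. Nothing further is needed.
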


\begin{theorem}\label{teor2}
If   $(G, +)$  is an abelian group then it satisfies H. 
\end{theorem}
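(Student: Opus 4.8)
The plan is to exhibit an explicit solution in each case using the inverse operation, and then to derive uniqueness from cancellation, which itself is an immediate consequence of the group axioms. Since $(G,+)$ is abelian, the two equations $x+a=b$ and $a+y=b$ are interchangeable via commutativity (C), so in practice it suffices to handle one of them with care and observe that the other follows; I would nonetheless spell out both for completeness.

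For existence in $x+a=b$, I would propose the candidate $x_0=b+(-a)$, where $-a$ is the inverse supplied by (IN). A direct computation using associativity (A), the inverse property $(-a)+a=0$, and the neutral element (NE) yields $x_0+a=(b+(-a))+a=b+((-a)+a)=b+0=b$. For $a+y=b$, commutativity rewrites it as $y+a=b$, so the same element $y_0=b+(-a)$ works; alternatively one checks $a+(b+(-a))=b$ directly using (C), (A), (IN) and (NE).

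For uniqueness, I would first record the right-cancellation law: if $u+a=v+a$ then $u=v$, obtained by adding $-a$ on the right of both sides and simplifying with (A), (IN) and (NE). Applying this to two solutions $x_1,x_2$ of $x+a=b$ gives $x_1+a=b=x_2+a$, hence $x_1=x_2$; the argument for $a+y=b$ is symmetric, or reduces to the previous one by (C).

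The only point worth flagging is bookkeeping rather than a genuine obstacle: because the paper is explicitly concerned with which elementary properties are invoked, one should be careful to use only the group axioms (A), (C), (NE), (IN) and to make each associativity and commutativity step visible. Once the candidate solution $b+(-a)$ is written down, the whole argument is only a few lines.
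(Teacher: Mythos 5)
Your argument is correct and is the standard one: exhibit $b+(-a)$ as an explicit solution and deduce uniqueness from cancellation, itself a consequence of (A), (IN) and (NE). The paper in fact states Theorem~\ref{teor2} without proof, so your write-up supplies exactly the routine verification the authors left implicit.
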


Although classical structures such as the abelian group and the commutative semigroup satisfy the properties mentioned, this does not mean that these properties are not independent.

\section*{Examples}

\begin{enumerate}
\item The natural numbers with usual addition and multiplication are commutative semigroups, have neutral elements 0 and 1 respectively, but are not quasigroups.
\item Integers, rational, real and complex numbers with the usual sum are commutative semigroups and loops.
\item A lattice with the meet  ($\land$) and join ($\lor$) operations is a commutative semigroup, but not a quasigroup.
\item The integers with subtraction $x \circ y = x - y$ is a quasigroup and satisfies AGI but not AGII. It neither satisfies ACI nor ACII, R, A or C and also does not  have a neutral element.
\item The integers with reciprocal subtraction $x \bullet y = y - x$ is a quasigroup which satisfies AGII, but not AGI. It neither satisfies ACI nor ACII,  R,  A or C and does not have a neutral element.
\item A set A with the second projection operation defined by $x \ \pi_2 \ y = y$, is a non-commutative semigroup,  which satisfies AGII but not AGI. It neither satisfies ACI nor ACII or R. It has a neutral element and it is not a quasigroup.
\item A set A with the first projection operation defined by $x \ \pi_1 \ y = x$, is a non-commutative semigroup, which satisfies R but not AGI. It neither satisfies AGII nor ACI or ACII. It does not have a neutral element and it is not a quasigroup.
\item In the real interval $[0, 1]$ the operation $p * q = 1 - pq$ is commutative but not associative. It does not have a neutral element. It is neither AGI nor AGII, ACI, ACII or R and it is not a quasigroup. This operation is used in probability theory to determine the probability for two independent events to not occur simultaneously when the probability of occurrence of one is $p$ and of the other is $q$.
\item In the ordered set $\{0, 1/2, 1\}$ the operation defined by table \ref{tabla1}, is commutative, but not associative. It has a neutral element 1. It is not AGI, neither AGII nor ACI, ACII or R and it is not a quasigroup. This operation is the logical equivalence which is used in a trivalent Heyting algebra and it was used by Reichenbach in a formulation of quantum mechanics \cite[366-367]{jammer}.
\begin{table}[h]
\begin{center}
\begin{tabular}{c|ccc}
$\leftrightarrow$&0&1/2&1\\ \hline 
0&1&0&0  \\ 
1/2&0&1&1/2 \\ 
1&0&1/2&1\\
\end{tabular}
\end{center}
\caption{Trivalent logical equivalence} \label{tabla1}
\end{table}
\end{enumerate}

\section{Substituting associativity and commutativity}

A strategy to search for new characterizations of abelian groups is to exchange or replace some of the properties that define them by others such that these new properties, when mixed with the remaining ones, give us a new definition of the abelian group. To this end, we now establish  some relations between structures that satisfy some of the properties  mentioned above. 
 
We can find structures characterized by some of the unusual properties  mentioned above which together with the property NE will result into the properties A and C.   

\begin{theorem}\label{teor3}
If  a groupoid  $(G, +)$  has a neutral element,  $e$, and satisfies the property AGII,  then it is a commutative semigroup.
\end{theorem}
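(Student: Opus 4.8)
The plan is to extract commutativity first from a single well-chosen instance of AGII, and then to read off associativity as an immediate consequence of AGII together with the commutativity just obtained. Throughout, $e$ denotes the neutral element, so $e + x = x = x + e$ for every $x \in G$.

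First I would specialize the axiom AGII, namely $a + (b + c) = (b + a) + c$, by taking $c = e$. The left-hand side becomes $a + (b + e) = a + b$, and the right-hand side becomes $(b + a) + e = b + a$. Hence $a + b = b + a$ for all $a, b \in G$, so $(G,+)$ is commutative (C).

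Next, to establish associativity (A), I would start again from AGII: $a + (b + c) = (b + a) + c$ for all $a,b,c \in G$. Applying the commutativity just proved to the factor $b + a$ on the right-hand side gives $(b + a) + c = (a + b) + c$, and therefore $a + (b + c) = (a + b) + c$. Thus $+$ is associative, and combining this with commutativity shows $(G,+)$ is a commutative semigroup.

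I do not anticipate a genuine obstacle here; the only point requiring a little care is the logical order of the argument — commutativity must be derived before associativity, since the associativity step consumes it — and the fact that we use $e$ only as a right neutral element in the first step and not at all in the second, which keeps the proof minimal.
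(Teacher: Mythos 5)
Your proof is correct and follows essentially the same route as the paper: specializing AGII at $c=e$ to obtain commutativity, then rewriting $(b+a)+c$ as $(a+b)+c$ in AGII to obtain associativity. No issues.
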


\begin{proof}
We first show that  $+$  is commutative. Applying the properties NE and AGII we obtain
\[a + b  =  a + (b + e) = (b + a) + e = b + a\]

From properties AGII and C we deduce that   $+$  is associative:
\[a + (b + c) = (b + a) + c = (a + b) + c \qedhere\] 
\end{proof}

The proof of theorem \ref{teor4} below is analogous to the one of theorem \ref{teor3}. 

\begin{theorem}\label{teor4}
If  a groupoid  $(G, +)$  has a neutral element and satisfies one of the pro\-perties CAI, CAII, AGI or R, then it is a commutative semigroup. 
\end{theorem}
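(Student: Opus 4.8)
The plan is to mimic the proof of Theorem \ref{teor3} in each of the four cases, first extracting commutativity by feeding the neutral element $e$ into one slot of the ternary identity, and then deriving associativity by combining the original property with the newly obtained commutativity. For the R case, commutativity falls out of $(a+b)+e = a+(e+b)$, which after two applications of NE gives $a+b = a+(e+b) = a+b$ — wait, that is not immediate, so more care is needed there; one instead writes $a+b = (a+b)+e = a+(e+b)$, and uses NE on the inner term $e+b$ only if one already knows $e$ is a left identity. This is the subtlety: ``neutral element'' must be understood as two-sided, so $e+b = b$ is available, giving $a+b = a+(e+b)$, which does not yet look like $b+a$. So for R I would instead exploit it differently, e.g. set $c=e$ to get $(a+b)+e = a+(e+b)$, i.e. $a+b = a+b$ — useless — and instead set $a=e$ to get $(e+b)+c = e+(c+b)$, i.e. $b+c = c+b$. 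That works.

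So concretely: for \textbf{R}, put $a=e$ in $(a+b)+c = a+(c+b)$ and use that $e$ is a left identity to get $b+c = c+b$; then associativity follows from $(a+b)+c = a+(c+b) = a+(b+c)$ using the just-proved commutativity on the inner pair. For \textbf{CAI}, $a+(b+c) = c+(a+b)$: put $b=e$ to get $a+c = c+(a+e) = c+a$ (right identity), giving commutativity; then $a+(b+c) = c+(a+b) = (a+b)+c$ using commutativity on the outer application, giving associativity. For \textbf{CAII}, $a+(b+c) = (c+a)+b$: put $a=e$ to get $b+c = (c+e)+b = c+b$, hence commutative; then $a+(b+c) = (c+a)+b = (a+c)+b$, and one more rearrangement with commutativity yields $(a+b)+c$. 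For \textbf{AGI}, $a+(b+c) = c+(b+a)$: put $b=e$ to get $a+c = c+a$; then associativity by combining with commutativity as before.

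The routine part is just choosing which variable to specialize to $e$ in each identity so that two of the three variables collapse and the third pair exhibits commutativity; once commutativity is in hand, associativity is a one- or two-line manipulation identical in spirit to the second display in the proof of Theorem \ref{teor3}. The main thing to watch — the only place any thought is required — is that in some of the four identities the naive specialization produces a trivial tautology, so one must pick the slot whose elimination leaves a genuinely nontrivial equation; I expect R and CAII to be the two where the first attempted specialization is the ``wrong'' one and a second choice is needed. No step presents a real obstacle, which is presumably why the authors simply say the proof is analogous to that of Theorem \ref{teor3} and omit it.
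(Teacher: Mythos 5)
Your overall strategy is exactly the one the paper intends (it omits the proof, saying only that it is analogous to Theorem \ref{teor3}), and your specializations to $e$ are correct in all four cases: $a=e$ in R and CAII, $b=e$ in CAI and AGI each yield commutativity, and the R, CAI and AGI associativity derivations go through as you describe. However, the final step of your CAII case is wrong as written. From $a+(b+c)=(c+a)+b=(a+c)+b$ you claim that ``one more rearrangement with commutativity yields $(a+b)+c$.'' It does not: commutativity only swaps the two operands of a single application of $+$, so starting from $(a+c)+b$ the only expressions reachable by C alone are $(c+a)+b$, $b+(a+c)$ and $b+(c+a)$ --- none of which is $(a+b)+c$, since $b$ and $c$ sit at different nesting levels and C cannot exchange them. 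You need a second application of CAII to finish, for instance
\[
a+(b+c) \;=\; a+(c+b) \;=\; (b+a)+c \;=\; (a+b)+c,
\]
where the first and third equalities use C and the second uses CAII (with the roles $x=a$, $y=c$, $z=b$ in $x+(y+z)=(z+x)+y$). With that one-line repair the proof is complete and matches the paper's intended argument.
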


Note that from theorems \ref{teor3} and \ref{teor4} we can replace the associative and commutative properties by any of the properties CAI, CAII, AGI, AGII or R, in the de\-fi\-ni\-tion of an abelian group. This way we obtain another characterization of the structure under consideration, only with three axioms.

\begin{theorem}
The following conditions are equivalent:
\begin{enumerate}
\item $(G, +)$  is an abelian group.
\item $(G, +)$  is a groupoid that satisfies the properties NE, IN and CAI.
\item $(G, +)$  is a groupoid that satisfies the properties NE, IN and CAII.
\item $(G, +)$  is a groupoid that satisfies the properties NE, IN and AGI.
\item $(G, +)$  is a groupoid that satisfies the properties NE, IN and AGII.
\item $(G, +)$  is a groupoid that satisfies the properties NE, IN and R.
\end{enumerate}
\end{theorem}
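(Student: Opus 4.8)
The plan is to show that condition (1) is equivalent to each of conditions (2)--(6) individually: I will prove that (1) implies each of (2)--(6) and that each of (2)--(6) implies (1). Since every item is then equivalent to (1), all six become mutually equivalent, and no direct implication between two items among (2)--(6) has to be established on its own.

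For $(1)\Rightarrow(i)$, $i\in\{2,\dots,6\}$, I would argue as follows. An abelian group is in particular a commutative semigroup, so Theorem \ref{teor1} immediately guarantees that it satisfies CAI, CAII, AGI, AGII and R. By definition it also has a neutral element and inverses, that is, it satisfies NE and IN. Hence each of the hypothesis lists (NE, IN, CAI), (NE, IN, CAII), (NE, IN, AGI), (NE, IN, AGII) and (NE, IN, R) holds, which is exactly what items (2)--(6) assert.

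For $(i)\Rightarrow(1)$, $i\in\{2,\dots,6\}$, the key point is that in each case $(G,+)$ is a groupoid possessing a neutral element and satisfying one of CAI, CAII, AGI, AGII or R; by Theorem \ref{teor3} (the case AGII) or Theorem \ref{teor4} (the cases CAI, CAII, AGI and R), such a groupoid is a commutative semigroup. Since condition $(i)$ also carries the hypotheses NE and IN, the structure $(G,+)$ is a commutative monoid in which every element has an inverse, i.e.\ an abelian group, which is item (1).

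Nearly all of the substance is already packaged in Theorems \ref{teor1}, \ref{teor3} and \ref{teor4}, so I do not anticipate a real obstacle. The one point that asks for a moment's care is the derivation of associativity inside Theorem \ref{teor4} for CAI, CAII and AGI: after specialising a suitable variable to $e$ to extract commutativity (for instance $a=e$ in CAI, or $b=e$ in CAII and in AGI), one must still recover $a+(b+c)=(a+b)+c$, which is typically done by invoking the chosen law a second time together with commutativity — for CAII, say, $a+(b+c)=(c+a)+b=b+(c+a)=(a+b)+c$. These are precisely the computations Theorem \ref{teor4} promises, so nothing new is needed.
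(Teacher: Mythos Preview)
Your proposal is correct and follows essentially the same route as the paper: the paper does not give a separate proof of this theorem but simply observes that it follows from Theorems~\ref{teor3} and~\ref{teor4} (for $(i)\Rightarrow(1)$) together with Theorem~\ref{teor1} (for $(1)\Rightarrow(i)$), which is precisely your argument. Your closing remarks on recovering associativity inside Theorem~\ref{teor4} merely unpack what the paper leaves implicit by calling that proof ``analogous'' to Theorem~\ref{teor3}.
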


It should be noted that the properties CAI, CAII, AGII and AGI have been used \cite[p. 10]{pad} for axiomatizing the lattice theory.

\section{Substituting inverse elements and neutral element}

From theorems \ref{teor1} and \ref{teor2} we deduce that an abelian group satisfies the properties CAI, CAII, AGI, AGII, R and H. 

The next theorem indicates how the property H may be used to characterize the abelian groups, but  without forgetting the commutative and associative properties.







\begin{theorem}\label{teor6}
If   $(G, +)$  is an associative and commutative quasigroup then it is an abelian group.
\end{theorem}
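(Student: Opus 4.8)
The plan is to extract a neutral element from property H applied to a \emph{single} element, then bootstrap it to all of $G$ using the surjectivity of translations (again a consequence of H) together with associativity, and finally read off the inverses directly from H.

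First I would fix an arbitrary $a \in G$, which is possible since a groupoid is by definition built on a nonempty set. By property H the equation $a + y = a$ has a (unique) solution; call it $e$, so that $a + e = a$. The claim is that this particular $e$ is a two-sided neutral element for the whole structure. To check that $b + e = b$ for every $b \in G$, use the other half of H: the equation $x + a = b$ has a solution $c$, i.e.\ $c + a = b$. Then associativity gives
\[ b + e = (c + a) + e = c + (a + e) = c + a = b, \]
and commutativity yields $e + b = b + e = b$. Hence $(G,+)$ satisfies NE.

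To obtain IN, observe that for each $b \in G$ property H provides a solution $y$ of $b + y = e$; by commutativity this $y$ is also a left inverse, so every element has an inverse. Combining this with the hypotheses A and C shows that $(G,+)$ is an abelian group, which is the desired conclusion; one may also invoke Theorem~\ref{teor3} or Theorem~\ref{teor4} once NE is established, but the direct verification above is just as short.

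I do not expect a genuine obstacle here. The only subtlety is that the neutral element cannot be assumed in advance: the whole point is that the element $e$ produced from the lone equation $a + e = a$ turns out to work uniformly for every other element, and it is precisely at that step that associativity and the solvability of $x + a = b$ are needed. Note that the uniqueness clause in H is not used in the argument — only the existence of solutions.
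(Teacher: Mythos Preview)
Your argument is correct. It differs from the paper's in one interesting respect: after producing $e$ with $a+e=a$, the paper shows $e$ is neutral for an arbitrary $b$ by computing $a+b=(a+e)+b=a+(e+b)$ and then invoking the \emph{uniqueness} clause of H (cancellation) to conclude $e+b=b$; it then repeats a uniqueness argument to identify $e_a$ with $e_b$. You instead write $b=c+a$ via the \emph{existence} clause of H and compute $b+e=(c+a)+e=c+(a+e)=c+a=b$ directly, so no cancellation is needed and the extra step $e_a=e_b$ disappears. Your observation that uniqueness is never used is therefore a genuine strengthening: the proof goes through for any associative commutative groupoid in which the equations $a+y=b$ and $x+a=b$ merely have \emph{some} solution.

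One small quibble: the aside about invoking Theorem~\ref{teor3} or Theorem~\ref{teor4} once NE is established is misplaced, since those theorems derive A and C from NE together with one of the cyclic/Abel--Grassmann identities, whereas here A and C are already among the hypotheses. It does no harm to the proof, but you should drop that sentence.
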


\begin{proof}
Since $(G, +)$ is a quasigroup, for all $a \in G$, the equation  $a + x = a$  has a unique solution, say  $e_a$, i.e.  $a + e_a = a$. By C,  $a + e_a = e_a + a = a$. 

Now, let  $b \in G$  then by A,  $a + b = (a + e_a) + b = a + (e_a + b)$ and as the equation  $a + y = d$  with  $d = a + b$, has a unique solution, we conclude that  $b = e_a + b$. Therefore,  $e_a + b = b = e_b + b$ and again by uniqueness of the solution of the equation  $y + b = b$, it follows that  $e_a = e_b$. Hence,  $e_a$ is the neutral element of  $G$ since the above argument is valid for all  $b \in G$.  

The existence of an inverse element for each element $a$ of $G$ is guaranteed by the exis\-tence of the solution of the equation  $a + x = e$ with  $e$ the neutral element, and pro\-perty C.
\end{proof}

From the arguments presented in the proof of theorem \ref{teor6} we can conclude: 

\begin{theorem}\label{CA}
If   $(G, +)$  is a quasigroup then it satisfies the property of being cancelative (CA). CA is defined as follows: for all  $a, b, c \in G$,
\center{if  \ $a + b = a + c$ \ then \ $a = c$ \ \ and \ \ if  \ $b + a = c + a$ \ then \ $b = c$}
\end{theorem}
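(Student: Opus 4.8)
The plan is to extract the needed cancellation properties directly from the uniqueness of solutions to the defining equations of a quasigroup, exactly as was implicitly done inside the proof of Theorem~\ref{teor6}. Recall that a quasigroup $(G,+)$ satisfies property H: for all $a,b\in G$, each of the equations $x+a=b$ and $a+y=b$ has a \emph{unique} solution in $G$. The two cancellation statements to be proved are left cancellation (if $a+b=a+c$ then $b=c$) and right cancellation (if $b+a=c+a$ then $b=c$), and each will follow from the uniqueness clause applied to the appropriate one of these two equations.

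First I would prove left cancellation. Suppose $a+b=a+c$, and set $d:=a+b$. Then $b$ is a solution of the equation $a+y=d$, since $a+b=d$ by definition; and $c$ is also a solution of the same equation $a+y=d$, since $a+c=a+b=d$. By property H the solution of $a+y=d$ is unique, so $b=c$. Next, for right cancellation, suppose $b+a=c+a$ and set $d:=b+a$. Then $b$ solves $x+a=d$ and $c$ solves $x+a=d$ as well, so by the uniqueness part of H applied to the equation $x+a=d$ we get $b=c$. This completes both parts.

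The argument is almost entirely routine; there is no real obstacle, only the care needed to apply the uniqueness clause to the correct equation (the equation with the ``shared'' element $a$ on the correct side). The one subtlety worth stating explicitly is that uniqueness of solutions, not merely existence, is the hypothesis being used — existence alone would not suffice. I would present the proof in two short displayed lines mirroring the style of Theorem~\ref{teor6}, noting that this is exactly the content already isolated in that proof when it invoked ``uniqueness of the solution of the equation $y+b=b$'' to conclude $e_a=e_b$.
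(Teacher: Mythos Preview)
Your argument is correct and is precisely the approach the paper intends: the paper does not give a separate proof of Theorem~\ref{CA} but simply remarks that it follows ``from the arguments presented in the proof of theorem~\ref{teor6},'' and those arguments are exactly the two uniqueness applications you spell out. You also correctly read the intended conclusion of the left-cancellation clause as $b=c$ (the displayed ``$a=c$'' in the statement is a typo).
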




Combining the results of theorems \ref{teor2} and \ref{teor6} we obtain other characterizations of abelian groups with three axioms. 

\begin{theorem}
The following conditions are equivalent:
\begin{enumerate}
\item $(G, +)$  is an abelian group.
\item $(G, +)$  is a groupoid that satisfies the properties H, A and C.
\end{enumerate}
\end{theorem}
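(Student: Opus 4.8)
The plan is to prove the equivalence of the two conditions by establishing both implications. The direction (1) $\Rightarrow$ (2) is immediate from earlier results: if $(G,+)$ is an abelian group, then it is associative (A) and commutative (C) by definition, and Theorem \ref{teor2} gives property H, so in particular $(G,+)$ is a quasigroup satisfying A and C. The substantive direction is (2) $\Rightarrow$ (1): assuming $(G,+)$ is a groupoid satisfying H, A and C, we must show it is an abelian group. But observe that a groupoid satisfying H is exactly a quasigroup (this is the definition of quasigroup recalled in the introduction, since H says precisely that the equations $x+a=b$ and $a+y=b$ each have a unique solution). Hence $(G,+)$ is an associative and commutative quasigroup, and Theorem \ref{teor6} tells us directly that it is an abelian group.

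So the proof essentially reduces to quoting Theorem \ref{teor2} for one direction and Theorem \ref{teor6} for the other, after noting the dictionary ``groupoid $+$ H $=$ quasigroup.'' First I would state the two directions, then in each case cite the relevant prior theorem. The only thing requiring a sentence of care is making explicit that H is exactly the quasigroup axiom, so that Theorem \ref{teor6}'s hypothesis ``associative and commutative quasigroup'' is met; this identification is really just unwinding the definitions given in the introduction.

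I do not anticipate any genuine obstacle here, since all the work has been front-loaded into Theorems \ref{teor2} and \ref{teor6}. The only mild subtlety worth a remark is that Theorem \ref{teor6}'s proof used property H in the bidirectional sense (it needed solvability of $a+x=a$ to produce $e_a$, uniqueness to identify the $e_a$'s and to get the neutral element, and solvability of $a+x=e$ to produce inverses), so one should confirm that property H as stated indeed supplies all of these — which it does, since H asserts unique solvability of both $x+a=b$ and $a+y=b$ for all $a,b$. With that check in place the equivalence follows. I would therefore write the proof as a short two-paragraph argument: one paragraph for $(1)\Rightarrow(2)$ invoking Theorem \ref{teor2}, and one for $(2)\Rightarrow(1)$ invoking Theorem \ref{teor6} after the quasigroup identification.
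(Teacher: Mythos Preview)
Your proposal is correct and matches the paper's own treatment: the paper does not give a separate proof of this theorem but simply remarks that it is obtained by combining the results of Theorems~\ref{teor2} and~\ref{teor6}, exactly as you outline. Your added observation that ``groupoid satisfying H'' is by definition a quasigroup is the only link needed, and it is implicit in the paper's setup.
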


\section{Substituting all properties}
Below we present results in which we cha\-rac\-te\-ri\-ze the structure of an abelian group without using the usual properties. We focus on replacing the property NE, a key pro\-per\-ty that has been used in previous results, without having to resort to the properties A and C. 

\begin{theorem}\label{CAI}
If   $(G, +)$  is a quasigroup that satisfies the property CAI, then it is a loop.
\end{theorem}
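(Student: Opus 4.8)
The plan is to produce a two-sided neutral element directly, which is exactly what is needed to upgrade a quasigroup to a loop. Since $(G,+)$ is a quasigroup, I fix an arbitrary $a \in G$ and let $e$ denote the unique solution of the equation $a + x = a$, so that $a + e = a$. The whole argument will then consist of showing that this single $e$ serves as a neutral element for \emph{every} element of $G$; I do not need to argue separately that $e$ is independent of the chosen $a$, since that is automatic once $e$ is shown to be a global identity.

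The first step is to show $e$ is a left identity. Applying CAI to the triple $(b,a,e)$ gives $b + (a+e) = e + (b+a)$; since $a + e = a$, the left-hand side collapses to $b + a$, so $e + (b+a) = b+a$ for every $b \in G$. Now the map $b \mapsto b+a$ is a bijection of $G$ — this is precisely the solvability/uniqueness content of property H for the quasigroup — so every element $d \in G$ can be written as $d = b + a$ for a suitable $b$, and therefore $e + d = d$ for all $d \in G$.

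The second step promotes $e$ to a right identity. Apply CAI to the triple $(d,d,e)$: this yields $d + (d+e) = e + (d+d)$, and by the left-identity property just obtained the right-hand side equals $d+d$. Hence $d + (d+e) = d + d$, and cancelling $d$ on the left (legitimate in any quasigroup by Theorem \ref{CA}) gives $d + e = d$. Since $d$ was arbitrary, $e$ is also a right identity, so $e$ is a neutral element and $(G,+)$ is a loop. Notably, no appeal to associativity or commutativity is made, which is the point of this section.

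The only delicate point — the ``main obstacle'' — is the asymmetry of CAI: cyclic associativity does not treat its three slots symmetrically, so the left-identity and right-identity halves require genuinely different substitutions, and the right-identity step is the one where cancellation must be invoked. A minor but worth-stating point is the justification that ``every element of $G$ has the form $b+a$'': this is just surjectivity of right translations, immediate from property H, and it is what converts the pointwise identity $e + (b+a) = b+a$ into the universal statement $e + d = d$.
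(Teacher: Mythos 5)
Your proof is correct, and it is leaner than the one in the paper. Both arguments start the same way (take $e$ with $a+e=a$ for a fixed $a$), but they diverge after that. The paper first derives $e_a+a=a$ by cancellation, then establishes left-neutrality of $e_a$ through a four-step chain of CAI rewritings ending in $(e_a+b)+a=b+a$ followed by right cancellation, and finally obtains right-neutrality indirectly, by showing $e_a=e_b$ for every $b$. You instead get left-neutrality from a single instance of CAI, namely $b+(a+e)=e+(b+a)$, combined with the surjectivity of the right translation $x\mapsto x+a$ — a feature of property H that the paper never invokes explicitly — and then get right-neutrality from one more instance, $d+(d+e)=e+(d+d)$, plus left cancellation. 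What your route buys is brevity and the elimination of the $e_a=e_b$ bookkeeping; what the paper's route buys is that it never needs the surjectivity half of H in this step, only the uniqueness (cancellation) half, which keeps all of the loop theorems in Sections 3 stylistically uniform (each one runs entirely on Theorem \ref{CA}). Every substitution you make is a legitimate instance of CAI as stated, and the appeal to Theorem \ref{CA} for left cancellation is exactly right, so there is nothing to fix.
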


\begin{proof}
For all $a \in G$, let  
\begin{equation}
a + e_a = a
\end{equation} 
with  $e_a$ the unique solution of the equation  $a + x = a$. Combining (1) with the pro\-per\-ty CAI we obtain  $e_a + a = e_a + (a + e_a) = e_a + (e_a + a)$.  From theorem \ref{CA} we get  
\begin{equation}
e_a + a = a.
\end{equation} 

Given  $b \in G$, from (2) and CAI we have  
\[(e_a + b) + a = (e_a + b) + (e_a + a) = e_a + (a + (e_a + b)) = e_a + (b + (a + e_a))\]
and by (1) and CAI we obtain
\[e_a + (b + (a + e_a)) = e_a + (b + a) = b + (a + e_a) = b +a.\]

Then  $(e_a + b) + a = b +a$ and by theorem \ref{CA} we conclude that  $e_a + b = b$. Hence,  $e_a + b = e_b + b$  and again by theorem \ref{CA},  $e_a = e_b$. As this argument is valid for all  $b \in G$ we arrive at the conclusion that $e_a$ is the neutral element of  $G$ which proves the theorem. 
\end{proof}

\begin{theorem}\label{CAII}
If   $(G, +)$  is a quasigroup that satisfies the property CAII, then it is a loop.
\end{theorem}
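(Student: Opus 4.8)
The plan is to follow the strategy of the proof of Theorem~\ref{CAI}: use the quasigroup axiom to produce, for each $a\in G$, the element $e_a$ with $a+e_a=a$, and then exploit CAII together with cancellativity (Theorem~\ref{CA}) to show that $e_a$ does not depend on $a$ and is a two-sided neutral element, so that $(G,+)$ is a loop.

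First I would fix $a$ and let $e_a$ be the unique solution of $a+x=a$, so that $a+e_a=a$. The first genuinely new ingredient is that CAII, written $p+(q+r)=(r+p)+q$, shuffles the three slots differently from CAI, so a literal imitation of the CAI computation does not close up. The way around this is to establish first the auxiliary identity $e_a+e_a=e_a$: applying CAII to $e_a+(a+e_a)$ gives $(e_a+e_a)+a$, while $a+e_a=a$ rewrites the left-hand side as $e_a+a$; comparing the two and cancelling $a$ on the right (Theorem~\ref{CA}) yields $e_a+e_a=e_a$.

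With the idempotence of $e_a$ in hand, the rest is a short chain, each link being one application of CAII followed by one cancellation. Applying CAII to $b+(e_a+e_a)$ gives $(e_a+b)+e_a$, and replacing $e_a+e_a$ by $e_a$ on the left gives $b+e_a$; cancelling $e_a$ on the right produces $e_a+b=b$ for every $b\in G$ (in particular $e_a+a=a$, the analogue of equation~(2) in the proof of Theorem~\ref{CAI}). Hence $e_a+b=b=e_b+b$, and cancelling $b$ gives $e_a=e_b$ for all $a,b$, so there is a single element $e$ with $e=e_a$ for every $a$. This $e$ is left neutral by the identity just proved and right neutral because $b+e=b+e_b=b$; therefore $(G,+)$ is a quasigroup with a neutral element, i.e. a loop.

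The step I expect to be the main obstacle is exactly the first move: recognising that one must detour through the idempotence $e_a+e_a=e_a$ and choose the instantiations of CAII so that the surplus copy of $e_a$ that CAII creates on the right can be cancelled. Once that is found, every remaining step is routine.
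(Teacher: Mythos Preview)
Your argument is correct and essentially coincides with the paper's first proof: both pivot on deriving the idempotence $e_a+e_a=e_a$ from CAII and cancellation, then use it to show $e_a$ is a two-sided identity independent of $a$; your version is in fact slightly leaner, since you apply the idempotence directly with an arbitrary $b$ (via $b+(e_a+e_a)=(e_a+b)+e_a$) rather than first specializing to $b=a$ and then doing a separate step for general $b$, as the paper does. The paper also records a second, different proof, which shows that CAII in a quasigroup forces associativity and CAI and then invokes Theorem~\ref{CAI}.
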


We shall give two proofs for this theorem: one, similar to the previous proof, sho\-wing directly that there is a neutral element and the other, proving that under these assumptions the property CAI holds and so the assertion follows from theorem \ref{CAI}.  

\begin{proof}[Proof 1]
As for all $a \in G$ the equation $a + x = a$ has a unique solution, say $e_a$, i.e. 
\begin{equation}
a + e_a = a
\end{equation} 
Applying (3) and the property CAII we have  
\[e_a + a = e_a + (a + e_a) = (e_a + e_a) + a\] 
From theorem \ref{CA} we get  
\begin{equation}
e_a + e_a = e_a 
\end{equation} 
Given  $b \in G$, from (4) and CAII it follows that 
\[a + e_a  = a + (e_a + e_a) = (e_a + a) + e_a\]

Again by theorem \ref{CA} we have $a = e_a + a$  for all  $a \in G$. Now let $b \in G$, by (3) and CAII we obtain  $b + a = b + (a + e_a) = (e_a + b) + a$. By theorem \ref{CA},  $b = e_a + b$. Thus,  $e_a + b = e_b + b$ and so  $e_a = e_b$.  As this argument is valid for all  $b \in G$ we again conclude that  $e_a$ is the neutral element of  $G$ and as a consequence  $(G, +)$ is a loop.
\end{proof}

\begin{proof}[Proof 2]
We first show that  $+$  is associative. Let  $a, b, c \in G$, as  $G$  is a quasigroup there is  $u \in G$ such that  $b + u = c$. Hence,  
\begin{equation}
a + (b + c) = a + (b + (b + u))
\end{equation}
Applying the property CAII repeatedly we get
\[a + (b + (b + u)) = ((b + u) + a) + b = (u + (a + b)) + b = (a + b) + (b + u)\] 
and replacing  $b+u$  by  $c$  we conclude that  $a + (b + c) = (a + b) + c$.   

Now let us prove that  $+$  satisfies CAI.  Again applying the property CAII repeatedly to (5), we obtain
\begin{align*}
&a + (b + (b + u)) = a + ((u + b) + b) = (b + a) + (u + b) \\
&\hspace*{0.5cm}= (b + (b + a)) + u = ((a + b) + b) + u = b + (u + (a + b))
\end{align*}
By the property A and replacing  $b+u$  by  $c$,  we have  $a + (b + c) = c + (a + b)$. Then from theorem \ref{CAI} it follows that  $(G, +)$ is a loop. 
\end{proof}

\begin{theorem}\label{AGII}
If   $(G, +)$  is a quasigroup that satisfies the property AGII, then it is a loop.
\end{theorem}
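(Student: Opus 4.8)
The plan is to follow the pattern of Proof 1 of Theorem~\ref{CAII}: use the quasigroup hypothesis to produce, for each $a\in G$, the unique solution $e_a$ of the equation $a+x=a$, and then show that this $e_a$ is independent of $a$ and is a two-sided neutral element. The only tools required are the defining identity AGII, which I will use in the form $p+(q+r)=(q+p)+r$, and the cancellation property CA provided by Theorem~\ref{CA}.

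First I would record that $a+e_a=a$ by definition of $e_a$. The first substantive step is to show that $e_a$ is also a left neutral element for $a$, i.e. $e_a+a=a$. For this, substitute $p=e_a$, $q=a$, $r=e_a$ into AGII to get $e_a+(a+e_a)=(a+e_a)+e_a$; since $a+e_a=a$, the left-hand side collapses to $e_a+a$ and the right-hand side to $a+e_a=a$, whence $e_a+a=a$. (Here no appeal to cancellation is even needed.)

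Next I would upgrade $e_a$ from a local identity to a left identity on all of $G$. Fix $b\in G$ and substitute $p=a$, $q=e_a$, $r=b$ into AGII to obtain $a+(e_a+b)=(e_a+a)+b$; by the previous step $(e_a+a)+b=a+b$, so $a+(e_a+b)=a+b$, and left cancellation (Theorem~\ref{CA}) yields $e_a+b=b$. In particular $e_a+b=b=e_b+b$, and right cancellation gives $e_a=e_b$. Hence there is a single element $e\in G$ with $e=e_a$ for every $a$; it satisfies $e+b=b$ for all $b$ (just shown) and $b+e=b+e_b=b$ for all $b$, so $e$ is a neutral element and $(G,+)$ is a loop.

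I do not anticipate a genuine obstacle here: the computation is short once the correct substitutions into AGII are chosen, and the passage from ``$e_a$ works locally'' to ``$e_a$ works globally'' is exactly the cancellation argument already deployed in Theorems~\ref{CAI} and~\ref{CAII}. The only things to watch are bookkeeping: applying left versus right cancellation on the side dictated by the statement of Theorem~\ref{CA}, and keeping the three slots of AGII straight when instantiating. An alternative route, in the spirit of Proof~2 of Theorem~\ref{CAII}, would be to first derive associativity and then the law CAI and conclude via Theorem~\ref{CAI}; but the direct argument above is shorter, and that is the one I would present.
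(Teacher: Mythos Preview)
Your proof is correct and follows essentially the same route as the paper's own argument: define $e_a$ via $a+e_a=a$, use the same AGII substitution to get $e_a+a=a$, then push $e_a$ to a global identity via cancellation. The only cosmetic difference is that in the third step the paper instantiates AGII as $b+(e_a+a)=(e_a+b)+a$ and cancels on the right, whereas you instantiate it as $a+(e_a+b)=(e_a+a)+b$ and cancel on the left; the two variants are interchangeable.
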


\begin{proof}
For each  $a \in G$ let  $e_a$ the unique solution of the equation  $a + x = a$, i.e. 
\begin{equation}
a + e_a = a
\end{equation}
By (6) and property AGII we have
\[e_a + a = e_a + (a + e_a) = (a + e_a) + e_a = a + e_a = a\]
Therefore, for all  $a \in G$, it holds that
\begin{equation}
e_a + a = a = a + e_a
\end{equation}

Now, let  $b \in G$, then by (7) and the property AGII we obtain  
\[b + a = b + (e_a + a) = (e_a + b) + a\]
Furthermore, by theorem \ref{CA} we conclude that  $b = e_a + b$. Hence,  $e_a + b = e_b + b$ and so  $e_a = e_b$. Since this argument is valid for all  $b \in G$, $e_a$ is the neutral element of  $G$ which proves the theorem. 
\end{proof}

\begin{theorem}\label{R}
If   $(G, +)$  is a quasigroup which satisfies the property R, then it is a loop.
\end{theorem}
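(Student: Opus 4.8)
The plan is to follow the same template as the proofs of Theorems \ref{CAI}, \ref{CAII} and \ref{AGII}: fix an element, extract a candidate neutral element from the quasigroup axiom, and then use property R together with cancellativity (Theorem \ref{CA}) to promote it to a genuine two-sided identity. Since $G \neq \emptyset$, I would pick any $a \in G$ and let $e$ be the unique solution of the equation $a + x = a$, so that $a + e = a$.

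The first key step is to apply R in the instance $(a + e) + b = a + (b + e)$, valid for every $b \in G$. The left-hand side collapses to $a + b$ because $a + e = a$, so $a + b = a + (b + e)$, and left cancellation (Theorem \ref{CA}) yields $b = b + e$. Thus $e$ is a right neutral element for every element of $G$. The second step is to apply R once more, now in the instance $(e + b) + e = e + (e + b)$: since every element $w$ satisfies $w + e = w$, the left-hand side equals $e + b$, so $e + b = e + (e + b)$, and left cancellation gives $b = e + b$. Hence $e$ is also a left neutral element, $e$ is the neutral element of $G$, and $(G,+)$ is a loop.

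I do not expect a real obstacle here; the only thing to get right is the choice of the three variables in R so that one side simplifies via $a + e = a$ (respectively via $w + e = w$). Once that is spotted, the argument is two lines and, unlike the CAI/CAII/AGII cases, it bypasses the auxiliary step showing $e_a = e_b$. If one prefers a proof strictly parallel to the earlier ones, one can instead define $e_a$ for each $a$ as the solution of $a + x = a$, show it is right and then left neutral, and conclude $e_a = e_b$ by cancellation; but the shortcut above is cleaner for R.
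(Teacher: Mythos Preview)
Your proof is correct and follows the same overall template as the paper's: extract a candidate neutral $e$ from the quasigroup axiom, then use property~R together with cancellativity (Theorem~\ref{CA}) to upgrade it to a two-sided identity. Your first step, showing $e$ is a right neutral element, is exactly the paper's argument. For the second step, however, the paper takes a slightly different path: it introduces, for each $a$, the left-local neutral $\hat e_a$ solving $y+a=a$, and then uses R in the form $e+(\hat e_a+a)=(e+a)+\hat e_a$ together with $(e+a)+e=e+a$ to cancel and conclude $\hat e_a=e$. Your instance $(e+b)+e=e+(e+b)$ is neater: it avoids introducing $\hat e_a$ altogether and uses only right division (to get $e$) and left cancellation, never invoking the left-division half of the quasigroup axiom explicitly. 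Both arguments are short; yours is the more economical of the two.
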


\begin{proof}
Since  $G$  is a quasigroup, for each  $a \in G$ there is  $e_a \in G$ such that  
\begin{equation}
a + e_a = a
\end{equation}
Therefore, given any  $b \in G$, by (8) and the property R we get
\[a + b = (a + e_a) + b = a + (b + e_a) \] 
By the theorem \ref{CA} we obtain  $b = b + e_a$. As a consequence  $b + e_a = b + e_b$ and so  $e_a = e_b$. This argument holds for all  $b \in G$. We therefore conclude that there is a unique right neutral element,  which we denote by  $e$. 

On the other hand, for all  $a \in G$  the equation  $y + a = a$  has a unique solution, say  $\hat{e}_a$, i.e. 
\begin{equation}
\hat{e}_a + a = a
\end{equation} 

As  $e$ is a right neutral element we have $(e + a) + e = e + a$. Applying (9) and the pro\-per\-ty R, we obtain  $e + a = e + (\hat{e}_a + a) = (e + a) + \hat{e}_a$. Then  $(e + a) + e = (e + a) + \hat{e}_a$ and by theorem \ref{CA}, $e = \hat{e}_a$ for all  $a \in G$. Thus,  $e$ is also left  neutral element of  $G$ which completes the proof.   
\end{proof}

Finally, the results of theorems \ref{CAI}-\ref{R} together with the theorems \ref{teor1}-\ref{teor4} and \ref{teor6} can be condensed into the following final result.

\begin{theorem}
The following conditions are equivalent:
\begin{enumerate}
\item $(G, +)$  is an abelian group.
\item $(G, +)$  is a groupoid that satisfies the properties H and CAI.
\item $(G, +)$  is a groupoid that satisfies the properties H and CAII.
\item $(G, +)$  is a groupoid that satisfies the properties H and AGII.
\item $(G, +)$  is a groupoid that satisfies the properties H and R.
\end{enumerate}
\end{theorem}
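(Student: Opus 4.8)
The plan is to prove the asserted equivalences by assembling the lemmas already established, not by any fresh computation. The one observation that makes everything fit together is that a groupoid satisfying H is \emph{exactly} a quasigroup: property H asserts precisely the unique solvability of $x+a=b$ and of $a+y=b$, which is the definition of quasigroup used throughout. With this identification in hand, every previously proved "quasigroup" theorem becomes available under hypothesis H.

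For the direction $(1)\Rightarrow(k)$ with $k\in\{2,3,4,5\}$, I would simply cite Theorems \ref{teor1} and \ref{teor2}: an abelian group is a commutative semigroup, hence by Theorem \ref{teor1} satisfies CAI, CAII, AGII and R, and by Theorem \ref{teor2} it satisfies H. Thus condition 1 implies each of conditions 2--5.

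For the reverse implications I would handle the four cases uniformly. Assume $(G,+)$ is a groupoid satisfying H — hence a quasigroup — together with one of CAI, CAII, AGII, R. Applying respectively Theorem \ref{CAI}, Theorem \ref{CAII}, Theorem \ref{AGII}, or Theorem \ref{R}, we conclude that $(G,+)$ is a loop; in particular it possesses a neutral element $e$. Next, invoking Theorem \ref{teor3} in the AGII case and Theorem \ref{teor4} in the CAI, CAII, and R cases, the structure is a commutative semigroup. It remains only to exhibit inverses: since $(G,+)$ is a quasigroup, for each $a\in G$ the equation $a+x=e$ has a (unique) solution, and by commutativity this solution is a two-sided inverse of $a$. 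Hence $(G,+)$ is a commutative monoid in which every element is invertible, i.e. an abelian group, which gives $(k)\Rightarrow(1)$.

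I do not anticipate a real obstacle. The only points needing a moment's care are the explicit remark that "quasigroup $+$ neutral element" already yields inverses — so property IN is not needed as a separate hypothesis — and the bookkeeping of which of Theorems \ref{teor3}--\ref{teor4} covers which axiom. Everything else is a direct appeal to the preceding results.
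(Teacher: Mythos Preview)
Your proposal is correct and follows essentially the same route the paper indicates: the paper states that the final theorem is obtained by combining Theorems \ref{CAI}--\ref{R} with Theorems \ref{teor1}--\ref{teor4} and \ref{teor6}, which is precisely your assembly. The only cosmetic difference is that where the paper cites Theorem \ref{teor6} to pass from ``commutative associative quasigroup'' to ``abelian group'', you instead inline the inverse-construction step (solve $a+x=e$ and use commutativity); since that is exactly the content of the last paragraph of the proof of Theorem \ref{teor6}, the two arguments coincide.
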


Examples 1 and 4 show the independence of each of the properties CAI, CAII, AGII and R  with respect to property H, and examples 1 and 5 show the independence of AGI and H.

One would think that with AGI we would have an analogous theorem to those presented in this section, however this combination of properties does not even provide us a loop. For example, integers with subtraction satisfy H and AGI but it is not an abelian group as it does not have a neutral element and hence no inverses.

\end{document}